\newtheorem{theorem}{Theorem}
\newtheorem{lemma}{Lemma}
\newtheorem{corollary}{Corollary}
\newcommand{\C}{\mathcal{C}}
\newcommand{\bb}{\mathbf{b}_1}
\newcommand{\bbr}{\mathbf{b}_1^r}
\newcommand{\bfb}{\mathbf{b}_2}
\newcommand{\bfbr}{\mathbf{b}_2^r}
\newcommand{\Au}{\mathrm{Aut}}
\begin{document}

\parindent = 0cm
\parskip = .3cm

\title[Symmetric Chain Orders]{Some Quotients of the Boolean Lattice are\\ Symmetric Chain Orders}

\author{Dwight Duffus}
\address {Mathematics \& Computer Science Department\\
   Emory University, Atlanta, GA  30322, USA}
   \email[Dwight Duffus]{dwight@mathcs.emory.edu}
   
\author{Jeremy McKibben-Sanders}
\address {Mathematics \& Computer Science Department\\
   Emory University, Atlanta, GA  30322, USA}
\email[Jeremy McKibben-Saunders]{jmckib2@gmail.com}

\author{Kyle Thayer}
\address {Mathematics \& Computer Science Department\\
   Emory University, Atlanta, GA  30322, USA}
\email[Kyle Thayer]{kthayer@emory.edu}

\keywords{symmetric chain decomposition; Boolean lattice}

\subjclass[2000]{Primary: 06A07}

\begin{abstract}
Canfield and Mason have conjectured that for all subgroups $G$ of the automorphism group of the
Boolean lattice $B_n$ (which can be regarded as the symmetric group $S_n$)
the quotient order $B(n)/G$ is a symmetric chain order.
We provide a straightforward proof of a generalization of a result of K. K. Jordan: namely,
 $B(n)/G$ is an SCO whenever $G$ is generated by powers of disjoint cycles.  The symmetric chain 
 decompositions of Greene and Kleitman provide the basis for partitions of these quotients.
\end{abstract}

\maketitle

\section{Introduction}\label{s:intro}

There are several familiar notions of symmetry for the family of finite ranked partially ordered sets.  This 
family can be defined in more general ways (see \cite{E}), but for our
purposes, all of our finite partially ordered sets $P$ have a minimum element $0_P$ and for all $x \in P$,
all saturated chains $C \subseteq P$ with minimum element $0_P$ and maximum $x$ have the same length 
$r_P (x) :\!= |C| - 1$.  Such $P$ are called {\it ranked} posets, $r = r_P$ is the {\it rank function} and $r(P)$,
the maximum over all $r(x), x \in P$, is the rank of $P$.
Note that a ranked ordered set satisfies the Jordan-Dedekind chain condition:  for all $x \le y$ in $P$, all 
saturated chains in the interval $[x, y]$ have the same length.

In a ranked order $P$ the chain $x_1 < x_2 < \dots < x_k$ is a \emph{symmetric chain} if it is saturated and if $r(x_1) + r(x_k) = r(P)$.  
A \emph{symmetric chain decomposition} or \emph{SCD} of $P$ is a partition of $P$ into symmetric chains.  
If $P$ has an SCD, call $P$ a {\it symmetric chain order}, or an {\it SCO}.  Here, we are concerned with ordered sets based on the \emph{Boolean 
lattice}, denoted $B_n$, which is the power set of $[n] = \{1, 2, \dots, n\}$ ordered by containment.  Clearly $B_n$ is a ranked poset, 
with $\emptyset$ being the minimum element, and $r(A) = |A|$ for all $A \subseteq [n]$.  In fact, it is an SCO \cite{BTK}.

We are interested in ordered sets defined by actions of the automorphism group of $B_n$.  It is well-known that this group is faithfully
induced by the symmetric group $S_n$ of all permutations on the underlying set $[n]$, so we will refer to $S_n$ as the automorphism 
group of $B_n$.  Given any subgroup $G$ of $S_n$, the {\it quotient} $B_n / G$ has as its elements
the orbits in  $B_n$ under $G$
$$[A] = \{ B \ | \ B = \sigma (A), \text{for some} \ \sigma \in G \},$$
$A \in B_n$, ordered by
$$[A] \le [B] \ \iff \ X \subseteq Y \ \text{for some} \ X \in [A]  \ \text{and}  \ Y \in [B] .$$
In studying Venn diagrams, Griggs, Killian and Savage \cite{GKS} explicitly constructed an SCD of the quotient $B_n/G$ for $n$ prime
and given that $G$ is generated by a single $n$-cycle.  They asked if this {\it necklace} poset is an SCO for arbitrary $n$.   Canfield and 
Mason \cite{CM} made a much more general conjecture:  for all subgroups $G$ of $S_n$, $B_n/G$  is a symmetric chain order.   

Jordan \cite{KKJ} gave a positive answer to the question of Griggs, Killian and Savage, basing the SCD of the quotient on the
explicit construction of an SCD in $B_n$ by Greene and Kleitman \cite{GK}.   The construction in \cite{KKJ} requires an intermediate
equivalence relation and some careful analysis.   Here we provide a more direct proof of a generalization of 
Jordan's theorem by  ``pruning" the Greene-Kleitman SCD.  More generally, we show that $B_n/G$ is an SCO provided that $G$ is generated by 
powers of disjoint cycles (see Theorem \ref{T:cycles}).    We also provide a different proof  that $B_n/G$ is an SCO when $G$ is a 2-element subgroup 
generated  by a reflection, based on an SCD of $B_{\lfloor n/2 \rfloor}$.

The ordered sets $B_n/G$ do share several forms of symmetry or regularity with the Boolean lattice.   An SCO $P$ is
necessarily rank-symmetric, rank-unimodal, and strongly Sperner (see, for instance, \cite{KKJ} for definitions).    A result of
Stanley \cite{RPS} shows that $B_n/G$ has these three properties for all subgroups $G$ of $S_n$.  However, these three conditions are not
sufficient to yield symmetric chain decompositions. 

On the other hand, Griggs \cite{JRG} showed that a ranked ordered set with the LYM property, rank-symmetry and rank-unimodality 
is an SCO.  It is not known whether the quotients $B_n/G$ have the LYM property in general, though they do
if $n$ is prime and $G$ is generated by an $n$-cycle, which gives the Griggs, Killian and Savage result.    (An SCO need not 
satisfy the LYM property -- see \cite{JRG} for examples.)   Pouzet and Rosenberg \cite{PR} obtain  Stanley's results and ``local" families 
of symmetric chains  for more general structures than the quotients $B_n/G$, but their results do not show that $B_n/G$ is an SCO.

\section{The Main Results}\label{s:main}

There are two results for quotients of $B_n$ by  groups generated by powers of disjoint cycles and for a particular 2-element group.  The third
result concerns quotients of powers of a finite chain.

\begin{theorem}\label{T:cycles}
Let $G$ be a subgroup of $S_n$ generated by powers of disjoint cycles.  Then the partially ordered set $B_n/G$ is a symmetric chain order.
\end{theorem}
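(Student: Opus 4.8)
The plan is to reduce to the case of a single cycle, and then to obtain a symmetric chain decomposition of the quotient from the Greene--Kleitman SCD of $B_n$ by discarding redundant chains (the ``pruning'').

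First I would reduce to one cycle. Write $G=\langle c_1^{d_1},\dots,c_k^{d_k}\rangle$ with $c_1,\dots,c_k$ disjoint cycles, let $S_i\subseteq[n]$ be the support of $c_i$, and let $T=[n]\setminus\bigcup_i S_i$. Since the $S_i$ are pairwise disjoint, $G$ is the internal direct product of the cyclic groups $G_i=\langle c_i^{d_i}\rangle$, with $G_i$ acting on $B_{S_i}$ and trivially on the other coordinates. The order isomorphism $B_n\cong B_{S_1}\times\cdots\times B_{S_k}\times B_T$ is $G$-equivariant, the $G$-orbit of a set $A$ is the product of the $G_i$-orbits of the $A\cap S_i$ together with the fixed set $A\cap T$, and comparing orbits coordinatewise gives a poset isomorphism $B_n/G\cong (B_{S_1}/G_1)\times\cdots\times(B_{S_k}/G_k)\times B_T$. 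As $B_T$ is an SCO and a product of SCOs is an SCO, it suffices to treat $G=\langle c^d\rangle$ with $c$ an $n$-cycle; since $\langle c^d\rangle=\langle c^{\gcd(d,n)}\rangle$ we may assume $d\mid n$.

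Now fix the order $1<2<\cdots<n$, let $\mathcal{C}$ be the Greene--Kleitman SCD of $B_n$, and let $q\colon B_n\to B_n/G$ be the quotient map. The poset $B_n/G$ is ranked of rank $n$ with $r([A])=|A|$, and $q$ preserves rank. If $C=(A_0\lessdot\cdots\lessdot A_s)\in\mathcal{C}$, then $[A_0],\dots,[A_s]$ have the distinct ranks $|A_0|,\dots,|A_s|$, so $q$ is injective on $C$ and $q(C)=([A_0]\lessdot\cdots\lessdot[A_s])$ is a saturated chain in $B_n/G$; since $|A_0|+|A_s|=n$ it is even a symmetric chain. Thus $\{q(C):C\in\mathcal{C}\}$ is a family of symmetric chains covering $B_n/G$, and each orbit $[X]$ is covered exactly $|[X]|$ times, once for each of its elements (which lie in $|[X]|$ distinct members of $\mathcal{C}$). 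So it remains to select a subfamily $\mathcal{S}\subseteq\mathcal{C}$ that is \emph{orbit-disjoint} (no $G$-orbit meets two of its chains) and still meets every orbit: for such an $\mathcal{S}$ the union $\bigcup\mathcal{S}$ is a system of $G$-orbit representatives, and hence $\{q(C):C\in\mathcal{S}\}$ is a symmetric chain decomposition of $B_n/G$. I would build $\mathcal{S}$ greedily: list the chains of $\mathcal{C}$ in order of decreasing length, breaking ties by a fixed linear order on $B_n$ applied to the bottom elements, and put a chain into $\mathcal{S}$ exactly when every $G$-orbit it meets is met by no chain already chosen.

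With this construction orbit-disjointness of $\mathcal{S}$ is automatic, so the entire content is \emph{completeness}: the greedy procedure never leaves an orbit uncovered. This is the step I expect to be the main obstacle, and it is where the structure of the Greene--Kleitman bracketing and of cyclic shifts must be used. If some orbit $[X]$ of rank $r$ were left uncovered, then for each $x\in[X]$ the chain $C_x\in\mathcal{C}$ through $x$ must have been rejected, i.e.\ $C_x$ meets, at some rank $\ne r$, an orbit already claimed by a chain of length $\ge\mathrm{len}(C_x)$. The plan is to rule this out by comparing the bracketing of the $0/1$-string of $x$ with the bracketings of its rotations $c^{jd}(x)$: understanding precisely which matched pairs are created or destroyed under a cyclic shift controls both $\mathrm{len}(C_{c^{jd}(x)})$ and which orbits $C_{c^{jd}(x)}$ meets, and one aims to use this to exhibit, among the $C_{c^{jd}(x)}$, a chain that the greedy process would have accepted. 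Everything else — the product reduction and the fact that each $q(C)$ is a symmetric chain — is routine; this comparison of Greene--Kleitman bracketings along a rotation orbit is the one genuinely technical ingredient.
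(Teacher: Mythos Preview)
Your reduction to a single cycle and the idea of pushing the Greene--Kleitman chains down to the quotient are exactly what the paper does.  The gap is in the selection step: your greedy rule keeps a chain only when \emph{every} orbit it meets is still fresh, so you only ever keep whole Greene--Kleitman chains.  This fails already for $n=4$ and $G=\langle\sigma^2\rangle=\langle(1\,3)(2\,4)\rangle$.  The Greene--Kleitman chains (in decreasing length) are
\[
C_1=\{\emptyset,\{1\},\{1,2\},\{1,2,3\},\{1,2,3,4\}\},\quad
C_2=\{\{2\},\{2,3\},\{2,3,4\}\},\quad
C_3=\{\{3\},\{1,3\},\{1,3,4\}\},
\]
\[
C_4=\{\{4\},\{1,4\},\{1,2,4\}\},\quad
C_5=\{\{2,4\}\},\quad
C_6=\{\{3,4\}\}.
\]
The orbit $[\{1,3\}]=\{\{1,3\}\}$ is a singleton, and its unique representative lies only on $C_3$.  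But $C_3$ also contains $\{3\}$, whose orbit $\{\{1\},\{3\}\}$ is already claimed by $C_1$, so your rule rejects $C_3$ (regardless of how ties among $C_2,C_3,C_4$ are broken), and $[\{1,3\}]$ is never covered.  The completeness step you flagged as ``the one genuinely technical ingredient'' is therefore not just technical --- as stated, it is false.

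The paper's procedure differs in exactly this point: it accepts $C_i$ as soon as \emph{some} orbit on it is fresh, and then \emph{prunes} $C_i$ to the subset $C_i'$ of elements whose orbits are not yet covered.  The work then goes into showing that $C_i'$ is itself a symmetric chain.  This is done via two lemmas about the Greene--Kleitman bracketing under rotation: one says that $(\sigma(X))^*=\sigma(X^*)$, giving the symmetry of $C_i'$, and the other says that if $A\in C_w$ with $|A|\le\lceil n/2\rceil$ has an $H$-equivalent element on a longer chain, then so does $f^{-1}(A)$ --- which forces the non-fresh part of $C_w$ to sit at the two ends.  In the example above this yields $C_3'=\{\{1,3\}\}$, a symmetric singleton.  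Your sketch of ``comparing bracketings along a rotation orbit'' is headed toward exactly these lemmas; what needs to change is the selection rule, so that you are proving the pruned pieces are symmetric rather than trying to show whole chains suffice.
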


The proof of Theorem \ref{T:cycles} follows from this sequence of results.   The new  proof of Lemma \ref{L:Jordan}, which is a modest generalization of
Jordan's result,  is given in Section {\ref{S:Jordan}.

\begin{lemma}\label{L:Jordan}
Let $\sigma$ be an $n$-cycle in $S_n$ and let $H$ be a subgroup of the group generated by $\sigma$. Then $B_n/H$
is a symmetric chain order.
\end{lemma}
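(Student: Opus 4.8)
The plan is to carry the Greene--Kleitman SCD of $B_n$ down to the quotient and then trim the resulting chains until they partition $B_n/H$. Since $\langle\sigma\rangle$ is cyclic of order $n$, there is a unique divisor $d$ of $n$ with $H=\langle\sigma^{d}\rangle$, and we may take $\sigma=(1\,2\,\cdots\,n)$, so that $\sigma^{d}$ acts on $[n]$ as rotation by $d$. Write $\pi:B_n\to B_n/H$ for the quotient map and let $\mathcal{C}$ be the Greene--Kleitman SCD of $B_n$ coming from the parenthesis matching of $0/1$-words; recall that each $C\in\mathcal{C}$ is symmetric about rank $n/2$, that $|C|$ is one more than the number of unmatched positions of any $A\in C$, and that $C$ is traversed upward by flipping the unmatched $0$'s of $A$ from left to right.

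The easy half comes first: for any $C\in\mathcal{C}$ the map $\pi$ is rank-preserving on $C$, hence injective there, so $\pi(C)$ is a saturated chain of $B_n/H$ with the same length and the same endpoint ranks as $C$, and therefore is itself a symmetric chain of $B_n/H$. Thus $\{\pi(C):C\in\mathcal{C}\}$ is a family of symmetric chains that \emph{covers} $B_n/H$. It is in general only a cover, since distinct Greene--Kleitman chains can meet a common $H$-orbit; and one cannot repair this upstairs, as an $H$-invariant SCD of $B_n$ need not exist --- for instance when $n=6$, $H=\langle\sigma^{3}\rangle$, a chain of such an SCD containing a subset fixed by $\sigma^{3}=(1\,4)(2\,5)(3\,6)$ would be pointwise fixed, hence (being saturated) a single subset of even rank, which is impossible for a symmetric chain of $B_6$. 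So the work is to prune the cover to a partition \emph{inside} $B_n/H$.

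The pruning is greedy. Order $\mathcal{C}$ by non-increasing length, breaking ties by a fixed rule (say, lexicographically on bottom elements), and process the chains in that order, maintaining the set $U\subseteq B_n/H$ of orbits not yet assigned (initially all of $B_n/H$). When $C$ is reached, assign the orbits of $\pi(C)\cap U$ to the chain $\pi(C)\cap U$, and then delete them from $U$. Everything hinges on the following claim: at the moment $C$ is processed, the already-assigned set $\pi(C)\setminus U$ is a \emph{symmetric collar} of $\pi(C)$ --- it consists of the lowest $t$ and the highest $t$ elements of $\pi(C)$ for some $t\ge 0$. Granting the claim, $\pi(C)\cap U$ is the central sub-chain obtained by deleting $t$ elements from each end of $\pi(C)$, hence is again a symmetric chain of $B_n/H$ or is empty; since $\{\pi(C)\}$ covers $B_n/H$ the process empties $U$, and the assigned chains are pairwise disjoint by construction, so together they form an SCD of $B_n/H$.

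The claim is the real obstacle; everything else is formal. An orbit $[A]$ with $A\in C$ is already assigned exactly when some rotation $\sigma^{jd}A$ lies in a chain $C''$ processed earlier, and then $|C''|\ge|C|$ forces $\sigma^{jd}A$ to have at least as many unmatched positions as $A$. The task is to show that this property of $[A]$ is monotone as $A$ moves from either end of $C$ toward its centre, and is symmetric under reflecting $C$ through its centre; this is where the tension between the cyclic group $H$ and the inherently linear parenthesis matching must be resolved, using the left-to-right flipping description of the Greene--Kleitman chains. Establishing it is the step I expect to take the most care. Finally, since $H\le\langle\sigma\rangle$ was arbitrary, the lemma follows.
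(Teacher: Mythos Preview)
Your outline is correct and is essentially the paper's own argument: order the Greene--Kleitman chains by non-increasing length, greedily prune, and show that what gets removed from each $\pi(C)$ is a symmetric collar so that the surviving middle is still a symmetric chain. The collar claim you flag as ``the real obstacle'' is precisely what the paper isolates and proves in its two auxiliary lemmas: one establishes the reflection symmetry by showing $(\sigma X)^{*}=\sigma(X^{*})$ for every $X$ with $|X|\le\lfloor n/2\rfloor$ (hence the same for any power of $\sigma$), and the other establishes the monotonicity toward the bottom by showing that if $|A|\le\lceil n/2\rceil$ and some $H$-translate of $A$ lies in a chain of smaller index, then the same is true of $f^{-1}(A)$. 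Together these give exactly the ``monotone toward the centre and symmetric under reflection'' behaviour you describe, so your plan and the paper's coincide; what remains in your write-up is to supply those two lemmas.
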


The following fact is well-known and can be proved by an argument much like the original proof in \cite{BTK} that the divisor lattice
of an integer is an SCO.  (In \cite{E}, this credited to Alekseev \cite{A}.)

\begin{lemma}\label{L:product}
Let $P$ and $Q$ be partially ordered sets.  If $P$ and $Q$ are symmetric chain orders then so is $P \times Q$.
\end{lemma}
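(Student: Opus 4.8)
The plan is to assemble a symmetric chain decomposition of $P \times Q$ out of given SCDs of the two factors by cutting $P \times Q$ into ``boxes'' and decomposing each box separately. Recall that $P \times Q$ is again ranked, with $r_{P \times Q}(p,q) = r_P(p) + r_Q(q)$ and $r(P \times Q) = r(P) + r(Q)$. Fix an SCD $\mathcal{C}$ of $P$ and an SCD $\mathcal{D}$ of $Q$. Since $\mathcal{C}$ partitions $P$ and $\mathcal{D}$ partitions $Q$, the boxes $C \times D$ with $C \in \mathcal{C}$ and $D \in \mathcal{D}$ partition the underlying set of $P \times Q$, so the whole problem reduces to the following claim: \emph{if $C$ is a symmetric chain of $P$ and $D$ is a symmetric chain of $Q$, then the subposet $C \times D$ of $P \times Q$ can be partitioned into chains that are symmetric in $P \times Q$.} Taking the union of such partitions over all pairs $(C,D)$ then yields the desired SCD of $P \times Q$.

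To prove the claim, write $C = \{c_0 < c_1 < \dots < c_a\}$ and $D = \{d_0 < d_1 < \dots < d_b\}$, and assume $a \le b$ (otherwise swap the roles of $P$ and $Q$). Since $C$ and $D$ are saturated, $r_P(c_i) = r_P(c_0) + i$ and $r_Q(d_j) = r_Q(d_0) + j$; hence in $P \times Q$ the element $(c_i, d_j)$ has rank $\alpha + \beta + i + j$, where $\alpha = r_P(c_0)$ and $\beta = r_Q(d_0)$, and the ranks occurring in $C \times D$ fill the integer interval from $\alpha + \beta$ to $\alpha + \beta + a + b$. Because $C$ and $D$ are symmetric,
$$(\alpha + \beta) + (\alpha + \beta + a + b) = \bigl(r_P(c_0) + r_P(c_a)\bigr) + \bigl(r_Q(d_0) + r_Q(d_b)\bigr) = r(P) + r(Q) = r(P \times Q).$$
Therefore any saturated chain inside $C \times D$ whose bottom $(c_i,d_j)$ and top $(c_{i'},d_{j'})$ satisfy $(i+j) + (i'+j') = a + b$ is automatically a symmetric chain of $P \times Q$. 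Since $(c_i, d_j) \mapsto (i,j)$ is an isomorphism of $C \times D$ onto the product of the chains $0 < 1 < \dots < a$ and $0 < 1 < \dots < b$, it remains only to partition this ``grid'' into saturated chains each of whose endpoints $(i,j),(i',j')$ satisfy $(i+j)+(i'+j') = a+b$.

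For that I would use the explicit staircase decomposition: for $0 \le k \le a$ put
$$\Gamma_k = \{(k,j) : 0 \le j \le b-k\} \cup \{(i, b-k) : k \le i \le a\}.$$
Ordered coordinatewise, $\Gamma_k$ is a saturated chain --- it rises in the second coordinate from $(k,0)$ to $(k,b-k)$, then in the first coordinate to $(a, b-k)$ --- and its endpoints give $k + \bigl(a + (b-k)\bigr) = a+b$, the symmetry condition above. A short case analysis on the sign of $i+j-b$ shows that each grid point $(i,j)$ lies in exactly one $\Gamma_k$: if $i+j < b$ then only in $\Gamma_i$; if $i+j>b$ then only in $\Gamma_{b-j}$; and if $i+j=b$ then $\Gamma_i = \Gamma_{b-j}$ and $(i,j)$ is its unique corner point. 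Thus $\{\Gamma_k : 0 \le k \le a\}$ is the required partition, which proves the claim and hence the lemma. The argument has no real obstacle; the only points that need care are verifying that ``symmetry within the grid'' transfers to symmetry in $P \times Q$ (the rank identity displayed above) and the disjointness/covering bookkeeping for the $\Gamma_k$. One could instead replace the staircase by an induction adjoining one ``layer'' at a time, in the style of de Bruijn, Tengbergen and Kruyswijk \cite{BTK}, but the explicit chains make the symmetry statement transparent.
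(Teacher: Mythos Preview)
Your proof is correct and is precisely the standard argument the paper has in mind: the paper does not actually prove Lemma~\ref{L:product} but only cites it as well-known, pointing to the de~Bruijn--Tengbergen--Kruyswijk argument \cite{BTK} (and Alekseev \cite{A}). Your reduction to boxes $C\times D$ followed by the explicit staircase decomposition of each such box is exactly that classical construction written out in full.
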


In the following lemma, use this notation.  Suppose that $\sigma_j$ $(j = 1, 2, \ldots , t)$ are disjoint cycles in $S_n$ and that
$\rho_j = \sigma_j^{r_j}$, for integers $r_1, r_2, \ldots,  r_t$.   Let
$X_j$ be the subset of $[n]$ of elements moved by $\rho_j$  $(j = 1, 2, \ldots , t)$, and let $X_0$ be all elements of $[n]$ fixed by
all the $\rho_j$'s.  Let $B(X)$ denote the Boolean lattice of all subsets of a set $X$.

\begin{lemma}\label{L:disjoint}
Let $H_j$ be the subgroup of $S_n$ generated by $\rho_j$  ($j = 1, 2, \ldots, t$) and let $G$ be the subgroup generated by  $\{ \rho_1 , \rho_2, \ldots, \rho_t \}$.  Then
$$B(n)/G \ \cong \ B(X_0) \times B(X_1)/H_1  \times \cdots \times B(X_t)/H_t .$$
\end{lemma}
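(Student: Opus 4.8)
The statement is essentially a "Chinese Remainder Theorem" for quotients of Boolean lattices by groups acting on disjoint supports, so the plan is to exhibit an explicit order-isomorphism and check it is well-defined and order-reflecting. Since the cycles $\sigma_1,\dots,\sigma_t$ are disjoint, the supports $X_1,\dots,X_t$ are pairwise disjoint, and together with $X_0$ they partition $[n]$. Hence every $A \subseteq [n]$ decomposes uniquely as $A = A_0 \cup A_1 \cup \cdots \cup A_t$ with $A_j = A \cap X_j$, giving the standard lattice isomorphism $B(n) \cong B(X_0) \times B(X_1) \times \cdots \times B(X_t)$. The group $G$ is generated by the $\rho_j$, and because $\rho_j$ moves only points of $X_j$, the elements of $G$ are exactly the products $\eta_1 \eta_2 \cdots \eta_t$ with $\eta_j \in H_j$; moreover such a product acts on $A$ coordinatewise: $(\eta_1\cdots\eta_t)(A) = \eta_1(A_1) \cup \cdots \cup \eta_t(A_t) \cup A_0$. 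This is the key structural fact, and it follows from disjointness of supports (elements of $H_j$ and $H_k$ commute, and $\eta_j$ fixes $X_k$ setwise for $k \neq j$, fixing $X_0$ pointwise).

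First I would establish that the orbit of $A$ under $G$ is $[A]_G = \{A_0\} \times [A_1]_{H_1} \times \cdots \times [A_t]_{H_t}$ under the identification above; this is immediate from the coordinatewise action, taking $H_0$ trivial so that $[A_0]_{H_0} = \{A_0\}$. This shows the map
$$\Phi \colon [A]_G \longmapsto \bigl(A_0,\, [A_1]_{H_1},\, \ldots,\, [A_t]_{H_t}\bigr)$$
is a well-defined bijection from $B(n)/G$ onto $B(X_0) \times B(X_1)/H_1 \times \cdots \times B(X_t)/H_t$. Then I would verify that $\Phi$ and $\Phi^{-1}$ are order-preserving. The forward direction: if $[A]_G \le [B]_G$, there are $X \in [A]_G$, $Y \in [B]_G$ with $X \subseteq Y$; intersecting with each $X_j$ gives $X \cap X_j \subseteq Y \cap X_j$ with $X \cap X_j \in [A_j]_{H_j}$ and $Y \cap X_j \in [B_j]_{H_j}$, so each coordinate inequality holds. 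The reverse direction: if the inequality holds in each coordinate, pick witnessing subsets $U_j \subseteq V_j$ in $X_j$ (with $U_0 = A_0 \subseteq B_0 = V_0$ forced), and then $U_0 \cup \cdots \cup U_t \subseteq V_0 \cup \cdots \cup V_t$ are in $[A]_G$ and $[B]_G$ respectively, again using the coordinatewise description of orbits.

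The main obstacle — really the only place any care is needed — is the claim that every element of $G$ is a product $\eta_1 \cdots \eta_t$ with $\eta_j \in H_j$ and that $G$ acts coordinatewise; once this is nailed down, everything else is bookkeeping with the partition $[n] = X_0 \sqcup X_1 \sqcup \cdots \sqcup X_t$. I would prove it by noting that disjointness of the cycles $\sigma_j$ (hence of the $\rho_j$) makes $\{\rho_1,\dots,\rho_t\}$ a commuting set, so $G = H_1 H_2 \cdots H_t$ as a set product, and any $\eta_j \in H_j$ restricts to the identity on $[n] \setminus X_j \supseteq X_0$ and permutes $X_j$ within itself; thus for $A = A_0 \cup \cdots \cup A_t$ the image $(\eta_1 \cdots \eta_t)(A)$ splits as $A_0 \cup \eta_1(A_1) \cup \cdots \cup \eta_t(A_t)$. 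That yields the orbit decomposition and completes the argument.
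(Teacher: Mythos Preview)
Your proposal is correct and follows essentially the same approach as the paper: you define the same map $\Phi([A]_G) = (A_0, [A_1]_{H_1}, \ldots, [A_t]_{H_t})$, and both arguments hinge on the factorization $G = H_1 H_2 \cdots H_t$ coming from disjointness of supports, which yields the coordinatewise action and hence the orbit and order comparisons. Your write-up is in fact more thorough than the paper's, which records only the map and the key equivalence $A \subseteq \tau(B) \iff A_0 \subseteq B_0$ and $A_j \subseteq \rho_j^{i_j}(B_j)$ without spelling out well-definedness or bijectivity.
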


\begin{proof}
For any $A \subseteq [n]$, let $[A]$ denote its equivalence class in $B(n)/G$, and let $A_j = A \cap X_j$, $j= 0, 1, \ldots, t$.
Define a map $\Phi$ on $B(n)/G$ by $\Phi([A]) = (A_0, [A_1], \ldots [A_t]).$  From the definition of the ordering of the quotient,
$[A] \le [B]$ in $B(n)/G$ if and only if there is some $\tau \in G$ such that $A \subseteq \tau(B)$.   Then 
$\tau = \rho_1^{i_1}  \rho_2^{i_2} \cdots \rho_t^{i_t}$ for nonnegative integers $i_1, i_2, \dots, i_t$.  The claimed 
isomorphism follows from this fact:
$$A \subseteq \tau(B) \ \text{if and only if} \ A_0 \subseteq B_0 \ \text{and} \ A_j \subseteq \sigma^{i_j} (B_j) \ \text{for}
\ j = 1, 2, \ldots, t \ .$$ \end{proof}

The following is actually a corollary of Theorem \ref{T:cycles}.    Indeed, a proof based on an approach like that used in the proof of Theorem
\ref{T:cycles}  -- a greedy pruning of a Greene-Kleitman SCD -- can be shown to provide a basis for the proof offered in Section \ref{S:reflection}.
However, the proof in Section \ref{S:reflection} provides some insight into the Greene-Kleitman SCD and may be of use for other choices for the
group of permutations, such as the dihedral group.

\begin{theorem}\label{T:reflection}
Let $G$ be a 2-element subgroup with non-unit element a product of disjoint transpositions.  Then the partially ordered set 
$B_n/G$ is a symmetric chain order.
\end{theorem}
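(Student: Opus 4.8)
The plan is to exhibit $B_n/G$ as a direct product in which all but one factor is manifestly a symmetric chain order, and then to treat the remaining factor by a short induction. Write the non-unit element of $G$ as $\tau$, a product of $k$ disjoint transpositions moving a set $X_1 \subseteq [n]$ with $|X_1| = 2k$, and let $X_0 = [n] \setminus X_1$ be the set of fixed points. Since $\tau$ acts trivially on $B(X_0)$, the assignment $[A] \mapsto \bigl([A \cap X_1],\, A \cap X_0\bigr)$ is an order isomorphism $B_n/G \cong \bigl(B(X_1)/\langle \tau \rangle\bigr) \times B(X_0)$; this is a direct check, along the lines of the proof of Lemma~\ref{L:disjoint}. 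As $B(X_0)$ is a symmetric chain order \cite{BTK}, Lemma~\ref{L:product} reduces the theorem to the fixed-point-free case: $n = 2m$ and $\tau$ a fixed-point-free involution.

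The crucial structural remark is that $B_{2m}$ can be identified with $B_m \times B_m$ in such a way that $\tau$ becomes the coordinate swap. Writing the ground set as $\{p_1, q_1, \dots, p_m, q_m\}$ with $\tau(p_i) = q_i$, the map $A \mapsto \bigl(\{i : p_i \in A\},\, \{i : q_i \in A\}\bigr)$ is an isomorphism of ranked posets $B_{2m} \cong B_m \times B_m$ that carries $\tau$ to $(L, H) \mapsto (H, L)$. It therefore suffices to prove that $(B_m \times B_m)/C_2$ is a symmetric chain order, where $C_2$ acts by interchanging the two factors.

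Fix a symmetric chain decomposition $B_m = \bigsqcup_\alpha C_\alpha$ (which exists by \cite{BTK}), together with a linear order on the index set. Then $B_m \times B_m = \bigsqcup_{\alpha, \beta} C_\alpha \times C_\beta$ as sets, and $C_2$ carries $C_\alpha \times C_\beta$ onto $C_\beta \times C_\alpha$. Each grid $C_\alpha \times C_\beta$ is a product of two chains and hence a symmetric chain order by Lemma~\ref{L:product}; moreover, since $C_\alpha$ and $C_\beta$ are symmetric chains in $B_m$, this grid occupies a block of ranks symmetric about the middle rank $m$ of $B_m \times B_m$, so its symmetric chains are symmetric in $B_m \times B_m$ as well. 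For $\alpha \neq \beta$ the quotient map is injective on $C_\alpha \times C_\beta$, since the $C_2$-translate of that set is the disjoint set $C_\beta \times C_\alpha$; hence, letting the unordered pair $\{\alpha, \beta\}$ with $\alpha \neq \beta$ vary, the images of these chains are symmetric chains of the quotient that partition the part of $(B_m \times B_m)/C_2$ coming from off-diagonal grids---a routine bookkeeping check.

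What remains is to decompose, for each $\alpha$, the diagonal block $(C_\alpha \times C_\alpha)/C_2$. If $C_\alpha$ has length $\ell$, this block is isomorphic to the ``triangular'' poset $T_\ell := \{(i, j) : 0 \le i \le j \le \ell\}$ under the componentwise order, with rank $i + j$: choosing the representative of an orbit whose first coordinate is at most the second, every relation $(i, j) \le (l, k)$ is already implied by $(i, j) \le (k, l)$. I claim $T_\ell$ is a symmetric chain order, by induction on $\ell$ (with $T_0$ a single point and $T_1$ a three-element chain). The saturated chain $(0,0) < (0,1) < (1,1) < (1,2) < (2,2) < \dots < (\ell-1, \ell) < (\ell, \ell)$ is symmetric of length $2\ell$, and deleting it leaves $\{(i, j) : j \ge i + 2\}$, which the shift $(i, j) \mapsto (i, j - 2)$ identifies order-isomorphically with $T_{\ell - 2}$, now situated symmetrically about the center $\ell$ of $T_\ell$; a symmetric chain decomposition of $T_{\ell - 2}$ then supplies the remaining symmetric chains of $T_\ell$, completing the induction. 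Because $T_\ell$ sits symmetrically about rank $m$ inside $(B_m \times B_m)/C_2$, its symmetric chains are symmetric there, and combining the off-diagonal contributions with these diagonal ones yields the desired decomposition. The heart of the proof is the identification with $B_m \times B_m$ and the inductive splitting of the triangular posets $T_\ell$; everything else is the bookkeeping of Lemma~\ref{L:product} and the known symmetric chain decomposition of the Boolean lattice. (The same argument in fact shows that $(P \times P)/C_2$ is a symmetric chain order whenever $P$ is.)
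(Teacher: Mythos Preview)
Your argument is correct and follows essentially the same route as the paper: reduce to the fixed-point-free case, identify $B_{2m}/G$ with $(B_m\times B_m)/C_2$, pull back an SCD of $B_m$, treat off-diagonal blocks $C_\alpha\times C_\beta$ as ordinary chain products, and handle each diagonal block $(C_\alpha\times C_\alpha)/C_2$ as the triangular poset $T_\ell$ by peeling off a single long symmetric chain and inducting. The only cosmetic difference is the choice of that chain: the paper peels the ``L-shaped'' chain $(0,0)<(0,1)<\cdots<(0,\ell)<(1,\ell)<\cdots<(\ell,\ell)$, whereas you peel the staircase $(0,0)<(0,1)<(1,1)<\cdots<(\ell,\ell)$; in both cases what remains is a copy of $T_{\ell-2}$ centered at the same rank, so the induction goes through identically. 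Your closing observation that the same proof gives an SCD of $(P\times P)/C_2$ for any SCO $P$ is a nice bonus not stated in the paper.
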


The last result concerns quotients defined by automorphism groups of products of chains.  Given any partially ordered set $P$
and subgroup $G$ of its automorphism group $\Au (P)$, the quotient $P/G$ has elements the orbits $[x]$ on $P$ defined  by $G$ 
with $[x] \le [y]$ in $P/G$ if there are $x' \in [x]$ and $y' \in [y]$ such that $x' \le y'$ in $P$.  It follows from a result of Chang, J\'onsson and 
Tarski \cite{CJT}, on the strict refinement property for product decompositions of partially ordered sets, that for chains $C$, positive
integers $m$  and $\alpha \in \Au(C^m)$ that there is some  $\phi \in S_m$ such that 
$$ \alpha (c_1, c_2, \ldots , c_m ) = (c_{\phi^{-1}(1)}, c_{\phi^{-1}(2)}, \ldots, c_{\phi^{-1}(m)} ), \ \text{for all} \  (c_1, c_2, \ldots, c_m ) \in C^m .$$
In particular, automorphism groups of powers of chains behave as those of the Boolean lattice and we can regard $\Au(C^m)$ as the symmetric
group $S_m$ acting on the coordinates of $C^m$.

\begin{theorem}\label{T:chain}
Let $C$ be a chain and let $K$ be a subgroup of $S_m$ generated by powers of disjoint cycles.   Then $C^m/K$ is an SCO.
\end{theorem}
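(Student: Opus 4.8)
The plan is to prove Theorem \ref{T:chain} by establishing, for products of equal chains, the exact analogues of the two lemmas that drive Theorem \ref{T:cycles} --- a decomposition into factors indexed by the generating cycles, and a treatment of a single cyclic factor --- and then combining them with Lemma \ref{L:product}. One cannot simply quote Theorem \ref{T:cycles}: writing the chain $C$ with $c+1$ elements as $B_c/S_c$ exhibits $C^m/K$ as $B_{cm}/(S_c \wr K)$, but for $c \ge 2$ the wreath product $S_c \wr K$ is not generated by powers of disjoint cycles (it need not even be abelian), so the argument must be repeated in the new setting. Throughout, borrow the notation of Lemma \ref{L:disjoint}: $[m] = X_0 \cup X_1 \cup \cdots \cup X_t$ with $X_j$ the support of the $j$-th generating cycle $\rho_j$, $X_0$ the coordinates fixed by $K$, and $H_j = \langle \rho_j \rangle$; for $S \subseteq [m]$ write $C^S$ for the product of copies of $C$ indexed by $S$, and $a|_S$ for the restriction to $C^S$ of $a \in C^m$.

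The analogue of Lemma \ref{L:disjoint} is
$$C^m/K \ \cong \ C^{X_0} \times C^{X_1}/H_1 \times \cdots \times C^{X_t}/H_t ,$$
and its proof is even more direct than in the Boolean case. Every $\tau \in K$ has the form $\rho_1^{i_1} \cdots \rho_t^{i_t}$, permuting coordinates within each $X_j$ and fixing those in $X_0$; since the order on $C^m$ is coordinatewise, $a \le \tau(b)$ if and only if $a|_{X_0} \le b|_{X_0}$ and $a|_{X_j} \le \rho_j^{i_j}(b|_{X_j})$ for all $j$. Hence $[a] \mapsto (a|_{X_0}, [a|_{X_1}], \ldots, [a|_{X_t}])$ is the required order isomorphism, by the argument proving Lemma \ref{L:disjoint}.

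The analogue of Lemma \ref{L:Jordan} --- that $C^k/H$ is a symmetric chain order whenever $H$ is a subgroup of the cyclic group generated by the $k$-cycle that rotates the coordinates of $C^k$ --- is the substantive step, and I would obtain it by running the pruning argument of Section \ref{S:Jordan} with $C^k$ in place of $B_k$. Two points make this plausible: the Boolean lattice $B_k$ is precisely $C^k$ for the two-element chain, and the Greene--Kleitman construction \cite{GK} produces a symmetric chain decomposition of an arbitrary product of chains by the same bracketing/matching procedure, with cyclic rotation of coordinates acting on that procedure for general $C^k$ exactly as it does for $B_k$. Thus the greedy ``pruning'' of the Greene--Kleitman SCD that yields an SCD of $B_k/H$ should carry over, with only notational changes, to yield an SCD of $C^k/H$. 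The main obstacle is precisely the verification that no step of Section \ref{S:Jordan} is sensitive to $|C|$ --- in particular that the properties of the parenthesization invoked there hold for generalized (rather than $0$-$1$) sequences --- but I expect no new idea to be needed.

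To finish, observe that every chain is trivially a symmetric chain order, so Lemma \ref{L:product} gives that $C^{X_0}$ is an SCO; each $C^{X_j}/H_j$ is an SCO by the previous paragraph, since $H_j$ lies in the cyclic group generated by the $|X_j|$-cycle on $X_j$. A last application of Lemma \ref{L:product} to the displayed decomposition shows that $C^m/K$ is a symmetric chain order.
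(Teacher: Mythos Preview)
Your reduction to a single cyclic factor via the analogue of Lemma \ref{L:disjoint} matches the paper exactly, as does the concluding appeal to Lemma \ref{L:product}. For the single-cycle step, however, the paper takes a different and tidier route than re-running Section \ref{S:Jordan} on general chain products. It realizes the $k$-element chain $C$ as the specific chain $00\cdots 0 \subset 10\cdots 0 \subset \cdots \subset 11\cdots 1$ inside $B_{k-1}$, so that $C^m$ sits as a sublattice of $B_n$ with $n=(k-1)m$, and the $m$-cycle rotating the factors of $C^m$ is literally the restriction of $\sigma^{k-1}$ for $\sigma = (1\,2\,\cdots\,n)$. The one new observation is that each Greene--Kleitman chain $C_j$ of $B_n$ is either contained in $C^m$ or disjoint from it --- a short check from {\bf (3.6)}--{\bf (3.8)}. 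Hence the pruned chains $C_{i_j}'$ that the proof of Lemma \ref{L:Jordan} produces for $B_n/\langle \sigma^{(k-1)r}\rangle$, restricted to those lying in $C^m$, already form an SCD of $C^m/K$.

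This buys the paper precisely the verification you flagged as the ``main obstacle'': there is no need to re-establish Lemmas \ref{L:symmetric} and \ref{L:tau} for a multiset bracketing, because the problem is pushed back into the Boolean lattice where those lemmas are already proved, and the coordinate rotation on $C^m$ becomes a power of the $n$-cycle so that Lemma \ref{L:Jordan} applies verbatim. Your plan is plausible and would probably go through, but it asks you to redo Section \ref{S:Jordan} in a new setting, whereas the paper's embedding trick reuses it wholesale at the cost of one easy sublattice-compatibility check.
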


The proof, presented in Section \ref{S:chain}, is a consequence of the proof of Lemma \ref{L:Jordan} and some observations on the 
Greene-Kleitman SCD.   V. Dhand \cite{D} has a new, very interesting result that is more general than the essential part of Theorem \ref{T:chain}: if $P$ is any SCO then so is $P^n/{\mathbb Z_n}$.  His arguments depend upon algebraic tools.   

We note that Theorem \ref{T:chain} can be stated more generally for chain products.  Let  $P = \prod_{i = 1}^n C_i^{m_i}$ where $C_j \ncong
C_k$ for $j \ne k$.   The result of Chang {\it et al.} \cite{CJT} shows that each automorphism of $P$ factors into an $n$-tuple from
$\prod_{i = 1}^n \Au (C_i^{m_i} )$ and that each $ \Au (C_i^{m_i} ) \cong S_{m_i}$.  (See \cite{Du} for a proof of this.)  Thus, if $K$ is a 
subgroup of $\Au(P)$ which also factors into a product of subgroups of $S_{m_i}$ of the form covered by Theorem \ref{T:chain} then,
by  Lemmas \ref{L:product} and \ref{L:disjoint}, $P/K$ is an SCO.    In particular, we have this consequence.

\begin{corollary}\label{c:chain-corollary}    Let $P$ be a product of chains and let $K$ be a subgroup of $\Au (P)$ that is generated
by powers of disjoint cycles.  Then $P/K$ is an SCO.  
\end{corollary}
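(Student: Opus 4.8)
The plan is to deduce the Corollary from the Chang--J\'onsson--Tarski factorization of $\Au(P)$ quoted above, together with Theorem~\ref{T:chain} and Lemmas~\ref{L:product} and~\ref{L:disjoint}. Write $P = \prod_{i=1}^{n} C_i^{m_i}$ with the $C_i$ pairwise non-isomorphic chains, and fix the induced partition of the $m := m_1 + \cdots + m_n$ coordinates of $P$ into blocks $I_1, \ldots, I_n$, where $I_i$ indexes the $m_i$ copies of $C_i$; then $\Au(P) \cong \prod_{i=1}^{n} \mathrm{Sym}(I_i)$, the factor corresponding to $I_i$ acting by permuting the coordinates within that block. Write $K = \langle \sigma_1^{r_1}, \ldots, \sigma_t^{r_t} \rangle$ with $\sigma_1, \ldots, \sigma_t$ pairwise disjoint cycles lying in $\Au(P)$.

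The first step is to observe that a single cycle $\sigma_j$ that lies in $\Au(P)$ must be supported inside one block: a cycle is transitive on its support while an automorphism fixes every block setwise, so $\mathrm{supp}(\sigma_j) \subseteq I_{i(j)}$ for a unique $i(j)$. Since disjoint permutations commute freely, $K = \langle \sigma_1^{r_1}\rangle \times \cdots \times \langle \sigma_t^{r_t}\rangle$, and regrouping the factors according to the block each one lives in gives $K = K_1 \times \cdots \times K_n$, where $K_i \le \mathrm{Sym}(I_i)$ is generated by those $\sigma_j^{r_j}$ with $i(j) = i$. Each $K_i$ is thus generated by powers of disjoint cycles, so by Theorem~\ref{T:chain} the quotient $C_i^{m_i}/K_i$ is an SCO.

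The second step is the product decomposition $P/K \cong \prod_{i=1}^{n} \bigl( C_i^{m_i}/K_i \bigr)$, which is the analogue of Lemma~\ref{L:disjoint} and is proved the same way. For $x = (x^{(1)}, \ldots, x^{(n)})$ with $x^{(i)} \in C_i^{m_i}$, the map $[x] \mapsto ([x^{(1)}], \ldots, [x^{(n)}])$ is well defined and bijective because $K = \prod_i K_i$ acts block by block, and it is an order isomorphism because $[x] \le [y]$ in $P/K$ says $x \le \tau(y)$ for some $\tau = (\tau_1, \ldots, \tau_n) \in K$, and here the $\tau_i \in K_i$ may be chosen independently precisely because $K$ is a direct product; this is equivalent to $[x^{(i)}] \le [y^{(i)}]$ in $C_i^{m_i}/K_i$ for every $i$. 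Then Lemma~\ref{L:product}, applied to the SCOs $C_i^{m_i}/K_i$, shows that $P/K$ is an SCO.

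For a corollary the argument is short, and its only real content is the reduction in the first step: a cycle belonging to $\Au(P)$ is confined to one block, so the generators, and hence $K$, split as a direct product over the blocks, which is exactly what lets the quotient split as a product. The single point needing care arises if the hypothesis is read liberally, allowing disjoint cycles $\sigma_j$ with only $\sigma_j^{r_j}$ required to lie in $\Au(P)$: then $\sigma_j$ may straddle several blocks and $K$ need not be a direct product over the blocks. In that case I would instead partition coordinates by $\mathrm{supp}(\sigma_j)$, use the Lemma~\ref{L:disjoint}-type argument to write $P/K$ as the product of the fixed-coordinate chain-product with the factors $P_j/\langle\sigma_j^{r_j}\rangle$, and identify each such factor with $Q_j^{\ell_j}/\Z_{\ell_j}$ for a suitable chain product $Q_j$ and $\ell_j = k_j/\gcd(k_j, r_j)$, where $k_j$ is the length of the cycle $\sigma_j$; one then concludes either by Dhand's theorem~\cite{D} that $R^{\ell}/\Z_{\ell}$ is an SCO whenever $R$ is, or by the greedy-pruning method behind Theorem~\ref{T:cycles}. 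I expect this last wrinkle to be the only step beyond routine bookkeeping.
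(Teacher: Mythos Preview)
Your argument is correct and follows essentially the same route as the paper: use the Chang--J\'onsson--Tarski factorization $\Au(P)\cong\prod_i S_{m_i}$ to see that $K$ splits blockwise, then apply Theorem~\ref{T:chain} to each $C_i^{m_i}/K_i$ and assemble with Lemmas~\ref{L:product} and~\ref{L:disjoint}. The paper's discussion (the paragraph immediately preceding the corollary) leaves implicit the point you make explicit---that a cycle lying in $\Au(P)$ is confined to a single block, so $K$ really does factor---and your flagging of the ``liberal'' reading (where only $\sigma_j^{r_j}$, not $\sigma_j$, is assumed to lie in $\Au(P)$) is a genuine observation the paper does not address.
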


We use Corollary \ref{c:chain-corollary} to deal with some cases where $K$ does not factor so nicely in \cite{DT}.

\section{The Proof of Lemma \ref{L:Jordan}}\label{S:Jordan}

We use the natural order $1 < 2< \ldots < n$ on $[n]$ and may assume that the $n$-cycle $\sigma$ is $ (1  \ 2 \ \cdots \ n)$.   This is valid because any
$n$-cycle $\rho$ is a conjugate of $(1  \ 2 \ \cdots \ n)$ and for any subgroup $K$ of $S_n$ and any $\pi \in S_n$, 
$B_n/K \cong B_n/ \pi^{-1} K \pi$ via $[A] \mapsto [\pi(A)]$. 

  We first describe the procedure for
obaining an SCD of $B_n/H$ based on the Greene-Kleitman SCD of $B_n$ then verify that the procedure yields the claimed SCD.

Let $C_1, C_2 , \ldots, C_t$, where $t = \binom{n}{\lfloor n/2 \rfloor}$, be the symmetric chains in the Greene-Kleitman decomposition, ordered by decreasing length.    For all $A \in B_n$, $[A]$ is the equivalence class containing $A$ in $B_n/H$ where $H$ is the subgroup
of $S_n$ generated by $\rho = \sigma^s$.

{\bf Claim:} There is a family $\C = \{ C_{i_1}', C_{i_2}', \ldots C_{i_m}' \}$, with $(i_1, i_2, \dots, i_m)$ a subsequence of $(1, 2, \dots, t)$,
that satisfies these conditions:

\begin{description}

 \item[(3.1)]  for all $1 \le j \le m$, $C_{i_j}' \subseteq C_{i_j}$ and is a symmetric chain in $B_n$;\\
 
  \item[(3.2)]  for all $1 \le r < s \le m$ and  for all $A \in C_{i_r}', B \in C_{i_s}'$, $A \notin [B]$; and,\\
 
  \item[(3.3)]  for all $[X]$ there is some $Y \in [X]$ such that $Y \in C_{i_j}'$ for some $j$.
 
\end{description}  

For $j = 1, 2, \ldots , m$, let $\widehat{C}_j = \{ [A]  \ | \ A \in C_{i_j}' \}$.   Then the chains $\widehat{C}_1, \widehat{C}_2,  \ldots , \widehat{C}_m$
cover $B_n/H$ (by {\bf(3.3)}), the sets are disjoint  (by {\bf(3.2)}), and form symmetric chains  (by {\bf(3.1)}).   Thus, it is enough to verify
the {\bf Claim} in order to prove Lemma \ref{L:Jordan}.

Several properties of the Greene-Kleitman SCD of $B_n$ are needed.   For the most part, these are well-known -- see, for instance,
the descriptions in \cite{E} and \cite{KKJ}.   It is useful to regard members of $B_n$ both as subsets of $[n]$ and as binary sequences of length $n$,
defined with respect to the natural order.  (Indeed, one needs to fix an order to speak of {\it the} Greene-Kleitman SCD.) The  SCD is obtained by a bracketing or 
pairing procedure that has several equivalent descriptions.  Here are two that are useful to us.  Let $A \subseteq [n]$.

 \begin{description}
   \item[(3.4)]   If $1 \notin A$ and $2 \in A$, pair 1 and 2; define $p_A(2) = 1$.  Suppose that we have considered
   $1, 2, \ldots , k-1$.  If $k \in A$ and there is some $j < k$, $j \notin A$ such that $j$ is unpaired, then let $p_A(k)$ be the
  maximum such $j$ and say $p_A(k)$ and $k$ are {\it paired}.   Continue for all $k$ in $[n]$.\\        
   \item[(3.5)]  For all $x \in A$ such that precisely half of the elements of the interval $[y, x]$ are members
  of $A$, for some $1 \le y < x$, let $p_A(x)$ be the maximum such $y$.
\end{description}  
   
Let $R(A)$ be the set of all $x$ for which $p_A(x)$ is defined,  let $L(A) = \{ p_A(x) \ | \ x \in R(A) \}$, and let $P(A) = L(A) \cup R(A)$.  
Now set
$$f(A) =  A \cup \{z\}  , \ z = \min ( [n] - (A \cup L(A))) ,$$
if $ [n] - (A \cup P(A)) \ne \emptyset$; otherwise $f(A)$ is undefined.  Then this rule inverts $f$:
$$f^{-1} (B) = B - \{z\} , \ z = \max ( B - R(B)) . $$
Let $\C(A) = \{ f^k (A) \ | \ k \in \mathbb{Z} \}$.   As $A$ runs over all  of $B_n$, the distinct $\mathcal{C}(A)$'s provide the
Greene-Kleitman SCD of $B_n$.   

Then the following hold for all $A \in B_n$.

\begin{description}
 \item [(3.6)]  For all $x \in R(A)$, $[p_A(x), x] \subseteq P(A)$.\\
 \item [(3.7)]  $\C(A) = \{ X \in B_n \ | \ R(X) = R(A) \}$ and $p_X(a) = p_A(a)$ for all $X \in \C(A)$ and for all $a \in R(A)$.\\
 \item [(3.8)]  $\min (\C(A)) = R(A)$, $\max (\C(A)) = [n] - L(A)$; in fact, $\C(A)$ is the chain
 $$R(A) \subset R(A) \cup \{ a_1 \} \subset R(A) \cup \{ a_1, a_2\} \subset \ldots \subset R(A) \cup \{ a_1, a_2, \ldots a_t\} =  [n] - L(A),$$
 where $[n] - (R(A) \cup L(A)) = \{a_1 < a_2 < \ldots < a_t \}$.
\end{description}

The following two lemmas provide properties of this SCD that substantiate the  {\bf Claim}.   Given a symmetric chain
$C$ in $B_n$ and $X \in C$ with $|X| \le \lfloor n/2 \rfloor$, let $X^*$ to be the member of $C$ with $|X^*| = n - |X|$. 

\begin{lemma}\label{L:symmetric} 
For $i = 1, 2, \ldots , t$ and for all $X \in C_i$ with $|X| \le \lfloor n/2 \rfloor$, $(\sigma(X))^* = \sigma(X^*)$.   Thus $(\sigma^j(X))^* = \sigma^j(X^*)$ for
all integers $j$, so $(\rho(X))^* = \rho(X^*)$ for all $\rho \in H$.
\end{lemma}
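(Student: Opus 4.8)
The plan is to reduce everything to a single statement about how the pairing function behaves under the shift $\sigma = (1\,2\,\cdots\,n)$, and then push the whole symmetric chain through that statement. Since $\sigma^j(X) = (\sigma(\cdot))^{\circ j}(X)$ and $H$ is generated by a power of $\sigma$, it suffices to prove the first assertion $(\sigma(X))^* = \sigma(X^*)$; the statements for $\sigma^j$ and for $\rho \in H$ then follow by iterating and specializing. So fix a Greene--Kleitman chain $C_i$ and $X \in C_i$ with $|X| \le \lfloor n/2 \rfloor$. The key point I would establish is:

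\begin{description}
 \item[($\ast$)] For every $A \in B_n$ with $|A| \le \lfloor n/2 \rfloor$, one has $R(\sigma(A)) = \sigma(R(A))$, $L(\sigma(A)) = \sigma(L(A))$, and more precisely $p_{\sigma(A)}(\sigma(x)) = \sigma(p_A(x))$ for all $x \in R(A)$.
\end{description}

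The subtlety in ($\ast$) is that $\sigma$ is a cyclic shift, not an order-preserving shift: the element $n$ is sent to $1$, so the natural linear order on $[n]$ is not respected, and the pairing procedure in {\bf(3.4)}/{\bf(3.5)} is defined with respect to that linear order. This is exactly where the hypothesis $|A| \le \lfloor n/2 \rfloor$ earns its keep. I would argue that when $A$ has at most half its elements, the bracketing of $A$ (thinking of $A$ as a binary string $a_1 a_2 \cdots a_n$, a $1$ in position $k$ iff $k \in A$) leaves position $n$ unpaired whenever $a_n = 0$, and when $a_n = 1$ the symmetry forces a specific unmatched $0$ to its left; in either case the "wrap-around" position does not create a new matched pair that would be destroyed by cyclically rotating the string by one. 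Concretely: in the standard balanced-parenthesis reading where $1$'s are "(" and $0$'s are ")", a string with at least as many $)$'s as $($'s has the property that rotating the last symbol to the front does not merge or split any matched pair --- the only matched pairs are nested runs, and an excess of $)$'s guarantees the final symbol is either an unmatched $)$ or, if it is a $($, it is itself unmatched. Establishing this combinatorial fact about rotations of non-positive-excess Dyck-type strings is the technical heart, and I expect it to be the main obstacle; once it is in hand, ($\ast$) is immediate, since all of $R$, $L$, $P$, and the pair function $p$ are defined purely in terms of which positions are matched to which, and rotation by $\sigma$ just relabels positions $k \mapsto k+1 \pmod n$ while preserving the matching.

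Granting ($\ast$), the lemma follows quickly. By {\bf(3.7)}, membership of $X$ and of $X^*$ in the common chain $\C(X) = C_i$ is governed by $R$: every element of $C_i$ has the same $R$-set, namely $R(X) = \min(C_i)$. Applying ($\ast$) to $X$ gives $R(\sigma(X)) = \sigma(R(X))$; applying {\bf(3.8)} to the chain through $\sigma(X)$, its bottom element is $\sigma(R(X))$ and its top element is $[n] - L(\sigma(X)) = [n] - \sigma(L(X)) = \sigma([n] - L(X))$, i.e. the chain $\C(\sigma(X))$ is precisely $\sigma(\C(X)) = \sigma(C_i)$, obtained by applying the automorphism $\sigma$ to every member of $C_i$. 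Since $\sigma$ is an automorphism of $B_n$ it preserves cardinalities, so it carries the unique element of $C_i$ of size $n - |X|$ --- which is $X^*$ --- to the unique element of $\sigma(C_i)$ of size $n - |X| = n - |\sigma(X)|$, which is $(\sigma(X))^*$. Hence $(\sigma(X))^* = \sigma(X^*)$, as claimed. One caveat to handle in passing: ($\ast$) was stated under $|A| \le \lfloor n/2\rfloor$, and in the chain $\sigma(C_i)$ the element $\sigma(X^*)$ has size $> \lfloor n/2 \rfloor$; but we never need to apply $\sigma$ directly to such an element --- we only ever apply it to $X$ (small side) and then use that $\sigma$, being an automorphism, commutes with the "take the dual-sized element of the same chain" operation. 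Finally, iterating $(\sigma(X))^* = \sigma(X^*)$ over the orbit and noting that each intermediate image $\sigma^k(X)$ still lies in a Greene--Kleitman chain with $\sigma^k(X^*)$ as its dual-sized member gives $(\sigma^j(X))^* = \sigma^j(X^*)$ for all $j \in \Z$, and since every $\rho \in H$ is some $\sigma^j$, the last assertion $(\rho(X))^* = \rho(X^*)$ follows.
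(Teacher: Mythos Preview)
Your central claim $(\ast)$ --- that $R(\sigma(A)) = \sigma(R(A))$ whenever $|A| \le \lfloor n/2 \rfloor$ --- is false, and with it the assertion that $\sigma$ carries the Greene--Kleitman chain $\C(X)$ onto the Greene--Kleitman chain $\C(\sigma(X))$. A small counterexample: take $n = 4$ and $A = \{3,4\}$, so $|A| = 2 = \lfloor n/2\rfloor$. The bracketing of $A$ (string $0011$) pairs $3$ with $2$ and $4$ with $1$, so $R(A) = \{3,4\}$ and $\C(A) = \{A\}$ is a singleton chain. But $\sigma(A) = \{1,4\}$ (string $1001$) has $1$ unpaired and $4$ paired with $3$, so $R(\sigma(A)) = \{4\} \ne \{4,1\} = \sigma(R(A))$, and $\C(\sigma(A)) = \{\{4\},\{1,4\},\{1,2,4\}\}$ is strictly longer than $\sigma(\C(A))$. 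The lemma itself still holds here ($A^* = A$ and $(\sigma(A))^* = \sigma(A)$), but not for the reason you give. Your informal justification for $(\ast)$ breaks exactly at this point: when $|A| \le \lfloor n/2\rfloor$ and $n \in A$, the paper observes (just before its Case~1) that necessarily $n \in R(A)$, i.e.\ the last symbol is a \emph{matched} closer, not an unmatched one. Rotating it to the front destroys that match.

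What the paper actually proves is the weaker, but sufficient, statement $R(\sigma(X)) = R(\sigma(X^*))$, established by a three-way case split on whether $n$ lies in $R(X)$, in $X^* - X$, or outside $X^*$. In two of the three cases one has $R(\sigma(X)) \ne \sigma(R(X))$ (a pair is either created or destroyed by the wrap-around), but the \emph{same} discrepancy occurs for $X^*$, so $\sigma(X)$ and $\sigma(X^*)$ still land in a common Greene--Kleitman chain --- possibly a different, longer or shorter, chain than $\sigma(C_i)$. From $R(\sigma(X)) = R(\sigma(X^*))$ and {\bf(3.7)} one gets $\sigma(X^*) \in \C(\sigma(X))$, and equality of cardinalities gives $\sigma(X^*) = (\sigma(X))^*$. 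Your final iteration step to reach $\sigma^j$ and $\rho \in H$ is fine and matches the paper.
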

A special case of the preceding lemma is in \cite{DHR}.  Since this reference is a technical report and the result does not appear to be available in the literature,
we prove this below.  The following is, to our knowledge, new and a proof is provided as well.

\begin{lemma}\label{L:tau}  Let $w \in \{1, 2, \dots, t\}$, and let $A \in C_w$  with $|A| \le \lceil n/2 \rceil$.  Suppose that there is some $B \in [A]$ such that $B \in C_j$ for some $j < w$.  Then
there is some $k < w$ and $D \in C_k$ such that $D \in [f^{-1}(A)]$, provided that $f^{-1}(A)$ is defined.
\end{lemma}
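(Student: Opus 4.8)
The plan is to track the pairing structure carefully as we pass from $A$ to $f^{-1}(A)$. Recall that if $f^{-1}(A)$ is defined, then $f^{-1}(A) = A - \{z\}$ where $z = \max(A - R(A))$, i.e.\ $z$ is the largest element of $A$ that is not a right-endpoint of a bracketed pair. Removing $z$ does not destroy any of the existing pairs of $A$ that lie entirely inside $[1,z-1]$, and by the description of the bracketing $f^{-1}(A)$ has the same left/right pairing on all elements below $z$; the only change is that $z$ (formerly unpaired-on-the-right, in the sense $z \notin R(A)$) is deleted, and possibly some element that was paired \emph{to} $z$ from the left — but in fact, since $z\notin R(A)$, $z$ is not a right endpoint, and one checks from (3.4) that removing $z$ simply leaves the remaining unpaired-zeros structure as before. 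The upshot I want to establish is: $R(f^{-1}(A)) = R(A)$ and $L(f^{-1}(A)) = L(A)$, and more importantly $P(f^{-1}(A)) \subseteq P(A)$, with the set of \emph{unpaired} positions of $f^{-1}(A)$ differing from that of $A$ only by the removal of $z$ from the unpaired-ones.

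Next I would use the hypothesis. We have $B \in [A]$ with $B \in C_j$, $j < w$; since $C_j$ is strictly longer than $C_w$ (chains are ordered by decreasing length) and $B$ is in the $j$-th chain while $A$ is in the $w$-th, the key numerical consequence is $|R(B)| < |R(A)|$ — shorter chains have larger minimum rank, and by (3.8) the minimum of $C_i$ is $R(\cdot)$, so a chain earlier in the list has a \emph{smaller} $|R|$. Write $B = \rho(A) = \sigma^{sq}(A)$ for some integer $q$ (here $\rho = \sigma^s$ generates $H$). Now consider $D := \rho(f^{-1}(A)) = \sigma^{sq}(A - \{z\}) = B - \{\sigma^{sq}(z)\}$. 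This $D$ is clearly in $[f^{-1}(A)]$. It remains to show $D$ lies in some chain $C_k$ with $k < w$, i.e.\ $|R(D)| < |R(A)| = |R(f^{-1}(A))|$ (using the previous paragraph's equality).

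The heart of the argument is therefore: deleting the single element $\sigma^{sq}(z)$ from $B$ cannot \emph{increase} $|R|$ by more than... well, we need $|R(D)| < |R(A)|$, and we know $|R(B)| < |R(A)|$, so since $|R(A)| = |R(f^{-1}(A))|$ and passing from a set to a set with one fewer element changes $|R|$ by at most $1$, it suffices to show $|R(D)| \le |R(B)|$, i.e.\ deleting an element never strictly increases the number of right-endpoints. This last fact is the crux and the step I expect to be the main obstacle: it is intuitively clear (removing a $1$ can only remove the pair it was the right end of, or leave that pair's structure rearranged downward) but making it precise requires the monotonicity lemma that $X \subseteq Y$ with $|Y| = |X|+1$ implies $|R(X)| \le |R(Y)|$ — equivalently that $f$ never decreases $|R|$, which follows because $f(A) = A \cup \{z'\}$ with $z' \notin L(f(A))$, so $R(f(A)) \supseteq R(A)$ and in fact $|R(f(A))| = |R(A)| + [z' \in R(f(A))]$, giving $|R(A)| \le |R(f(A))| \le |R(A)|+1$. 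Chaining this inequality from $D$ up to $B$ along the Greene-Kleitman chain through them (or more simply, noting $D \subseteq B$ with $|B| = |D|+1$) yields $|R(D)| \le |R(B)| < |R(A)| = |R(f^{-1}(A))|$, so $D$ sits on a chain strictly longer than $C_w$, hence with index $k < w$, completing the proof. The one subtlety to double-check is the boundary case $|A| \le \lceil n/2\rceil$ ensuring $f^{-1}(A)$ is still at or below the "top half" so that the chain-index/length correspondence via $|R|$ is the correct one; this is exactly why that hypothesis is imposed, and Lemma \ref{L:symmetric} is what guarantees the $\sigma$-action respects the chain structure so that $R(\sigma^{sq}(\cdot))$ behaves as needed.
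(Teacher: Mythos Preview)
Your overall plan is the right one --- set $D=\rho(f^{-1}(A))=B\setminus\{\rho(z)\}$ with $z=\max(A\setminus R(A))$ and compare $|R(D)|$ to $|R(A)|$ --- but the step you flag as ``the crux'' is simply false, and that is exactly where the real content of the lemma lives. It is \emph{not} true that deleting a single element from a set can never increase $|R|$. For instance with $n=4$ and $B=\{1,2,4\}$ (bit string $1101$) one has $R(B)=\{4\}$, while $B\setminus\{1\}=\{2,4\}$ (bit string $0101$) has $R=\{2,4\}$: removing an unpaired $1$ that is not the \emph{rightmost} unpaired $1$ frees a $0$ which then pairs with the next unpaired $1$ to its right, and $|R|$ goes up by one. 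Your attempted justification via $f$ does not help, because $D$ and $B$ are typically not on the same Greene--Kleitman chain (that happens only when $\rho(z)=\max(B\setminus R(B))$, i.e.\ when $D=f^{-1}(B)$), so there is no chain of $f$-steps linking them. A related minor issue: $j<w$ only gives $|R(B)|\le |R(A)|$, not strict inequality, since chains of equal length can appear in either order.

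The paper's proof confronts precisely this obstruction by splitting on the status of $\rho(z)$ in $B$. If $\rho(z)\in R(B)$ one checks directly that $|R(D)|=|R(B)|-1$, which is strong enough. If $\rho(z)\in B\setminus R(B)$ and equals $\max(B\setminus R(B))$, then $D=f^{-1}(B)\in C_j$ and we are done with $k=j$. The dangerous case is $\rho(z)\in B\setminus R(B)$ but $\rho(z)<\max(B\setminus R(B))$, exactly the configuration in the counterexample above. Here one does \emph{not} bound $|R(D)|$ at all; instead one shows this case is impossible under the hypothesis $|A|\le\lceil n/2\rceil$, by rotating $B$ further until the larger unpaired $1$ sits at position $n$ and observing that the resulting set is the top of a non-singleton symmetric chain, forcing $|A|>\lceil n/2\rceil$. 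So the hypothesis $|A|\le\lceil n/2\rceil$ is not bookkeeping for the ``chain-index/length correspondence'' (that correspondence, $|C_i|=n-2|R(\min C_i)|$, holds unconditionally); it is what rules out the one case where your monotonicity claim fails.
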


To prove the {\bf Claim} from these facts, define $\mathcal{C}$ inductively.

First, let $i_1 = 1$ and $C_{i_1}' = C_1$.  Suppose that  $C_{i_1}', C_{i_2}', \dots, C_{i_k}'$ are defined.
If there exists $i \in \{i_{k}+1 , \ldots, t \}$ such that for some $X \in C_i$,
\begin{equation}\label{E:bigcup}
[X] \cap (\bigcup_{j = 1}^{k} C_{i_j} ) = \emptyset
\end{equation}
let $i_{k+1}$ be the least such $i$ and let
\begin{equation}\label{E:prime}
C_{i_{k+1}}' = \{ Y \in C_{i_{k+1}} \ | \ [Y] \cap ( \bigcup_{j = 1}^{k} C_{i_j}') = \emptyset \}.
\end{equation}
 If there is no such $i$ then $m = k$ and the procedure is complete.
 
If $Y \in  C_{i_{k+1}}'$, with $|Y| \le  \lfloor n/2 \rfloor$ then $Y^* \in  C_{i_{k+1}}'$, by Lemma \ref{L:symmetric}.  Also, if
$Z \in C_{i_{k+1}}$ and $Y \subseteq Z \subseteq Y^*$ where $Y \in C_{i_{k+1}}'$ then $Z \in C_{i_{k+1}}'$ by Lemma \ref{L:tau}
and Lemma \ref{L:symmetric}.  Thus, $C_{i_{k+1}}'$ is symmetric in $B_n$ and {\bf (3.1)} holds.
Equation (\ref{E:prime}) verifies {\bf(3.2)}; {\bf (3.3)} follows from (\ref{E:bigcup}) and (\ref{E:prime}).

{\it Proof of Lemma \ref{L:symmetric}}.    The proof is divided into cases depending upon which of $R(X) \subseteq X \subseteq X^*$
contain $n$.    It is not possible that $n \in X - R(X)$, because $|X| \le \lfloor n/2 \rfloor$ means that for some $y < n$ precisely half
the elements of $[y, n]$ are in $X$, and, hence, $n \in R(X)$ by {\bf (3.5)}.   Consequently, there are three
cases.  In each case, we show that 
$$R(\sigma(X^*)) = R(\sigma(X)),$$ 
apply {\bf (3.7)} to see that $\sigma(X^*)$ and
$(\sigma(X))^*$ are both members of $\C(\sigma(X))$, and conclude that $\sigma(X^*) = (\sigma(X))^*$ since these sets both have
cardinality $n - |\sigma(X)|$.

{\bf Case 1:}  $n \notin X^*$

Since $n \in  [n] - L(X) = \max (\C(X)) $ and $n \notin X^*$, $X \ne \min (\C(X)) = R(X)$.   Thus, there exists $y = \min (X - R(X))$.   
If $y = 1$ then $p_{\sigma(X)}(2) = 1 = p_{\sigma(X^*)}(2)$.  For each $z \in R(X)$, $\sigma(z) = z + 1 \in R(\sigma(X))$ and each
$z + 1 \in R(\sigma(X))$ has $z \in R(X)$ apart from $z + 1 = 2$.  Thus,
\begin{align*}
   R(\sigma(X)) &= \sigma(R(X) \cup \{2\} &\text{since} \ p_{\sigma(X)} (2) = 1,\\
                           &= \sigma(R(X^*)) \cup \{2\} &\text{by} \ {\bf (3.7)},\\
                           &=R(\sigma(X^*))  &\text{since} \ p_{\sigma(X^*)} (2) = 1.
\end{align*}
If $y > 1$ we claim that $[1, y-1] \subseteq P(X)$.  Note that $y - 1 \in X$ as otherwise $y \in R(X)$ with $p_X(y) = y - 1$, contradicting
the choice of $y$.    By the minimality of $y$, $y - 1 \in R(X)$ and, by {\bf (3.7)}, $[p_X(y-1), y-1] \subseteq P(X)$.   Continue in the
same manner, with $p_X(y-1)-1$ in place of $y$, and thereby verify the claim that $[1, y-1] \subseteq P(X)$.
The argument is just about the same as when $y = 1$ except we use the fact that $[1, y-1] \subseteq P(X)$ and $1 \notin X$, so, $p_{\sigma(X)}(y+1) = 1$:
$$R(\sigma(X^*)) =     \sigma(R(X^*)) \cup \{y+1\}  = \sigma(R(X)) \cup \{y+1\}  = R(\sigma(X)) .$$
                         
{\bf Case 2:} $n \in R(X)$

Every element of $R(\sigma(X))$ is in $\sigma(R(X))$ and every element of $\sigma(R(X))$, except for 1, is in $R(\sigma(X)$.  Thus,
$$R(\sigma(X)) =  \sigma(R(X)) - \{1\} = \sigma(R(X^*)) - \{1\} = R(\sigma(X^*)).$$

{\bf Case 3:} $n \in X^* - X$

Since $n \in X^* - X$, {\bf (3.8)} shows that $X^* = \max(\C(X)) = [n] - L(X)$ and, thus,  $X = \min(\C(X)) = R(X)$. 

If $z + 1 \in R(\sigma(X))$ then $z \in X = R(X)$, so $R(\sigma(X)) \subseteq \sigma(R(X))$.  Conversely, $1 \notin \sigma(R(X))$,
and any $z + 1 \in \sigma(R(X))$ is obviously a member of $R(\sigma(X))$.  Thus, $R(\sigma(X)) = \sigma(R(X))$.  Similarly, since
$n \notin R(X)$, it follows that $R(\sigma(X^*)) = \sigma(R(X^*))$.  Hence, $R(\sigma(X^*)) = R(\sigma(X))$. 

Since  $(\sigma(X))^* = \sigma(X^*)$ for all $X$ with $|X| \le \lfloor n/2 \rfloor$, we can apply induction on $j$ to conclude that $(\sigma^j(X))^* = \sigma^j(X^*)$:\\

\centerline{$(\sigma^j(X))^* = (\sigma(\sigma^{j-1}(X)))^* = \sigma((\sigma^{j-1}(X))^*) = \sigma(\sigma^{j-1}(X^*)) = \sigma^j(X^*). \quad \square$}

\vspace{.3cm}
{\it Proof of Lemma \ref{L:tau}}.
As before, let $C_1, C_2 , \ldots, C_t$, where $t = \binom{n}{\lfloor n/2 \rfloor}$, be the symmetric chains in the Greene-Kleitman 
decomposition, ordered by decreasing length, and let $\sigma = (1 \ 2 \ \cdots  \ n)$.  Let $A \in C_w$ with $|A| \le \left \lceil \frac{n}{2} \right \rceil$.  Suppose that there exists a $j < w$ such that $B \in C_j$ and $B \in [A]$.  Hence there is an integer $r$ such that $B = \sigma^r(A)$.

Assume that $f^{-1}(A)$ is defined.  We show that there is a $k < w$ such that $D \in C_k$ and $D \in [f^{-1}(A)]$. Since $f^{-1}(A)$ is defined, $A - R(A) \ne \emptyset$.  Let $y = \max(A - R(A))$.   We may assume that $-(y-1) \le r \le n-y$, $r \ne 0$.  We consider two cases:

\textbf{Case 1:} $y + r \in R(B)$

Then $r > 0$ since otherwise $y$ would also be paired in $A$, contrary to its choice.    Each $z \in B$ with $y+r < z$ must be in
$R(B)$ since $y < z - r$ so, by the choice of $y$, $z - r \in R(A)$.  Now consider the binary sequence $\sigma^{r}(f^{-1}(A))$ contained in some chain $C_k$.  Recall that $f^{-1}(A) = A - \{y\}$ and note that $\sigma^{r}(f^{-1}(A)) = B - \{y + r \}$. It follows from this that $\sigma^{r}(f^{-1}(A))$ must have one fewer pairs than $B$, since $y + r$ will be unpaired in $\sigma^{r}(f^{-1}(A))$ while $y + r$ is paired in
$B$,  and there are no other differences in the pairings.    By {\bf (3.8)}, $|C_k| > |C_j|$, so $k < j < w$, as desired.

\textbf{Case 2:} $y + r  \in B - R(B)$

If $y + r = \max(B - R(B))$ then we are done, since then we have $f^{-1}(B) =\sigma^{r}(f^{-1}(A))$.  So suppose instead $z = \max(B - R(B))$
where $y + r < z$.  If $r  >  0$ then $z - r \in (A - R(A))$, contrary to the choice of $y$.  Thus $r < 0$.    If $z - r \le n$ then $z - r$ would be an
unpaired element of $A$, since $y < z - r$ remains unpaired in $A$.  This would contradict the choice of $y$.  Thus $n < z - r$.

We now prove that for some $p$, $\sigma^p(B)$ is the maximum element of its chain, and its chain is not a singleton.  This will contradict the fact that 
$|A| \le \left \lceil \frac{n}{2} \right \rceil$, since $|\sigma^p(B)| = |A|$.

Since $\sigma^{-r}(B) = A$, $-r > 0$, we obtain $A$ from $B$ by applying $\sigma$ $-r$ times.    Since $n < z - r$ there is some $p$ such that
$\sigma^p(z) = n$.  Let $X =   \sigma^p(B)$.  Then, $\sigma^p(z) \in X - R(X)$.    Because $n \in  X - R(X)$, $X = [n] - L(X)$.  By {\bf (3.8)}, $X$ is the maximum element of its chain.  The chain containing $X$ is not a singleton, since $X - R(X) \ne \emptyset$. $\square$

\section{The proof of Theorem \ref{T:reflection}}\label{S:reflection}

Let $\rho = (i_1 j_i ) (i_2 j_2 ) \cdots  (i_k j_k)$, where the transpositions are pairwise disjoint, let $X = \bigcup_{r = 1}^k \{ i_r, j_r \}$, and let
$G = \{1, \rho \}$.  Then
$$B_n / G \cong B(X)/G \times B([n] - X) $$
via the mapping $[A] \mapsto ( [A \cap X], A - X )$ for all $A \subseteq [n]$.   By Lemma \ref{L:product}, we may assume that $n$ is even
and that $n = 2k$.  Using the remark about conjugation at the beginning of Section \ref{S:Jordan}, we may assume that 
$\rho = (1 \ 2k)(2 \ 2k-1) \cdots (k \ k+1)$.  As noted in the introduction, $G$ can be generated by a power of a $2k$-cycle, so Theorem \ref{T:cycles}
applies.  (In fact,  $\rho = \tau^{-1} \sigma \tau$  where  $\sigma = (1 \ 2 \cdots 2k)$ and $\tau = (k+1 \ 2k)( k+ 2 \ 2k -1 ) \cdots$.~)
And, as we shall see, its proof method can be adapted to give the proof we offer here.  However, the argument below might help with
the most interesting open case, namely, showing that $B_n/D_{2n}$ is an SCO for the dihedral group $D_{2n}$.

Regard each $A \in B_{2k}$ as a concatenated pair of binary strings of length $k$.  That is, $A = \bb\bfbr$, where $\bb, \bfb \in \{0, 1 \}^k$ and 
$\mathbf{b}^r$ is the reverse of the binary $k$-sequence $\mathbf{b}$.  Then the equivalence classes in $B_n/G$ are the sets
$\{ \bb\bfbr , \bfb\bbr \}$; these sets have 2 elements except in the case that $\bb = \bfb$.

Let $C_1, C_2 , \ldots, C_t$, where $t = \binom{k}{\lfloor k/2 \rfloor}$, be any symmetric chain decomposition of $B_k$, ordered by decreasing length.   We define a total ordering $\preccurlyeq$ on  $B_k = \{0, 1 \}^k$ as follows:
\begin{equation}\label{E:order}
  \mathbf{b}_r  \preccurlyeq \mathbf{b}_s \ \text{if} \ \mathbf{b}_r \in C_i, \mathbf{b}_s \in C_j, \ i < j, \ \text{or if} \  \mathbf{b}_r \subseteq \mathbf{b}_s \ \text {in} \ C_i
  \ \text{for some} \  i.
\end{equation}
For $1 \le i < j \le t$, let $P_{ij} = C_i \times C_j$, with the coordinate-wise ordering induced by the containment order on $B_k$, for
each $i = 1, 2, \ldots, t$, let
$$P_{ii} = \{( \mathbf{b}_r , \mathbf{b}_s )  \in C_i \times C_i \ | \ \mathbf{b}_r  \subseteq \mathbf{b}_s \} , $$
ordered coordinate-wise, and let 
$$P = \bigcup_{1 \le i \le j \le t} P_{ij} ,$$
again, ordered coordinate-wise.  Thus, $P$ is a subset of $B_{2k}$ with the exactly the ordering inherited from the Boolean lattice.  

In fact, with $r_P$ and $r_{B_k}$ as the rank functions in $P$ and $B_k$, respectively, then for $1 \le i \le j \le t$ and
with $r_{B_k}(\min C_i ) = r_i, \ \text{and} \ r_{B_k} ( \max C_i ) = k - r_i$,
$$r_P ( \min P_{ij} ) = r_i + r_j,  \  r_P ( \max P_{ij} ) = 2k - (r_i + r_j ), \ \text{and} \ l( P_{ij}) = 2k - 2(r_i + r_j) .$$
We see that each $P_{ij}$ is a symmetric subset of $B_{2k}$ in which the covering relation is preserved, that is,
$(  \mathbf{b}_p , \mathbf{b}_q )$ is covered by $(  \mathbf{b}_u , \mathbf{b}_v )$ in some $P_{ij}$ if and only if   $\mathbf{b}_p \mathbf{b}_q^r$ is covered by   
$\mathbf{b}_u \mathbf{b}_v^r$ in $B_{2k}$. 

Consider the map $\phi$ of $P$ to $B_{2k}/G$ defined by $\phi (  (\bb, \bfb ) ) =   \{ \bb\bfbr , \bfb\bbr\}$.   Since $ \bb \preccurlyeq \bfb$ for
all  $(\bb, \bfb ) \in P$, $\phi$ is injective.  It is obviously a surjection.  It is also order-preserving:  if $(  \mathbf{b}_p , \mathbf{b}_q ) \le  (  \mathbf{b}_u , \mathbf{b}_v )$  in $P$ then $\mathbf{b}_p\mathbf{b}_q^r \le  \mathbf{b}_u\mathbf{b}_v^r$ in $B_{2k}$.

Since the rank of an equivalence class in $B_{2k}/G$ is the rank of its members in $B_{2k}$, it follows that a symmetric chain in $P$ is a symmetric chain in $B_{2k}/G$.
Thus, it is enough to proof the following.

{\bf Claim:}  $P$ has a symmetric chain decomposition.

Since $P$ is partitioned by  $P_{ij}$, $1 \le i \le j \le t$, each of which preserve the covering relation in $P$, it is enough to prove that each $P_{ij}$ has a partition into
chains, each of which is symmetric in $P$.

For $1 \le i < j \le t$, $P_{ij} = C_i \times C_j$ is a cover-preserving subset of $P$, with minimum element at level $r_i + r_j$ and maximum element at level $2k - (r_i + r_l)$ in 
$P$, a partially ordered set of length $2k$.  Then the ``standard" symmetric partition of a product of two chains (the original partition in \cite{BTK})
provides symmetric chains in $P$.

For $i = 1, 2, \dots, t$, $P_{ii} = \{( \mathbf{b}_r , \mathbf{b}_s )  \in C_i \times C_i \ | \ \mathbf{b}_r  \subseteq \mathbf{b}_s \} , $ 
where $C_i$ is the chain of binary strings $\mathbf{b}_{r _i}\subset \mathbf{b}_{r_i +1} \subset \cdots  \subset \mathbf{b}_{k - r_i}$, where 
$r_{B_k} (\mathbf{b}_s ) = s$ in $B_k$, that is, is an $s$-element set, for $s = r_i, r_i+1, \ldots, k - r_i$.   Then $P_{ii}$ is an interval in $P$ with
minimum element at level $2r_i$ and maximum element at level $2k - 2r_i$ in $P$.  Also,
$$( \mathbf{b}_{r_i} , \mathbf{b}_{r_i} ) < ( \mathbf{b}_{r_i} , \mathbf{b}_{r_i+1} ) < \cdots < ( \mathbf{b}_{r_i} , \mathbf{b}_{k-r_i} ) < ( \mathbf{b}_{r_i+1} , \mathbf{b}_{k - r_i}) <\cdots <  ( \mathbf{b}_{k-r_i} , \mathbf{b}_{k-r_i})$$
is a symmetric chain in $P$ and $P_{ii} - C$ is a cover-preserving subset of $P$, isomorphic to the product of two chains, with minimum element
$( \mathbf{b}_{r_i+1} , \mathbf{b}_{r_i +1})$ at level $2r_i + 2$ and maximum element $ ( \mathbf{b}_{k-{r_i}-1} , \mathbf{b}_{k - {r_i}-1})$ at level
$2k - 2r_i - 2$.   By induction, we have a decomposition of $P_{ii}$ by chains symmetric in $P$.  This verifies  the {\bf Claim} and completes
the proof of Theorem \ref{T:reflection}.

\section{The proof of Theorem \ref{T:chain}}\label{S:chain}

With Lemmas  \ref{L:product} and \ref{L:disjoint}, it is enough to prove the result for $K$ generated by a single $m$-cycle.
We assume that $C$ is the $k$-element chain $00 \ldots 0, 10 \ldots 0, 11 \ldots 0, 111 \dots 1$ in the Boolean lattice $B_{k-1}$.
Let $n = (k-1)m$.  Then $C^m$ is the sublattice of $B_n$ consisting of all binary sequences of length $n$ of the form
$$\mathbf{b} = \mathbf{b_1}\mathbf{b_2} \ldots \mathbf{b_m}, \ \text{where each} \ \mathbf{b}_i \in C .$$
That is, the elements of $C^m$ are exactly those $n$-sequences which are $m$ $(k-1)$-sequences of 1's followed by 0's.

Let $C_1, C_2 , \ldots, C_t$, where $t = \binom{n}{\lfloor n/2 \rfloor}$, be the symmetric chains in the 
Greene-Kleitman SCD of $B_n$, ordered by decreasing length, as in Section \ref{S:Jordan}.   We claim that for each $j$,
$C_j \subseteq C^m$ or $C_j \cap C^m = \emptyset$.

Suppose that $\mathbf{b} = \mathbf{b_1}\mathbf{b_2} \ldots \mathbf{b_m} \in C_j \cap C^m$.   With the notation in {\bf (3.6) - (3.8)},
and applying these to $A = \mathbf{b}$, we can see that $i^{\mathrm{th}}$ entry $b_i$ of $\mathbf{b}$ is determined as follows:
\begin{equation*}
  b_i \ = \ 
  \begin{cases}
  0 		& i \in L({\mathbf b}) \cup \{a_{r}, a_{r+1}, \ldots , a_t \} \\
  1		& i \in R({\mathbf b}) \cup \{a_{1}, a_2, \ldots , a_{r-1}\} . \\
  \end{cases}
\end{equation*}
for some $r$.
  
If $\mathbf{b}$ is not the maximum element of $C_j$ then its successor $\mathbf{b'}$ is obtained by changing the 0 in position $a_r$
to a 1.   Either $a_r = 1$ or the entry in $\mathbf{b}$ in position $a_r - 1$ is a 1, by {\bf (3.6)}.  Thus, $\mathbf{b'}$ consists of
$m$ $(k-1)$-sequences of 1's followed by 0's and, so, belongs to $C_j \cap C^m$.  If $\mathbf{b}$ is not the minimum element of 
$C_j$ then its predecessor $\mathbf{b''}$ is obtained by changing the 1 in position $a_{r-1}$ to a 0.    Either $a_{r-1} = n$ or the
entry in $\mathbf{b}$ in position $a_{r-1} +1$ is a 0, by {\bf (3.6)}.  Again $\mathbf{b''}$ consists of
$m$ $(k-1)$-sequences of 1's followed by 0's and, so, belongs to $C_j \cap C^m$.  Hence, if $C_j \cap C^m \ne \emptyset$ then
$C_j \subseteq C^m$.

Let $K = \langle \phi^r\rangle$ where we may assume that $\phi = (1 2 \cdots m)$.  We need an SCD for $C^m/K$.   We know that $C^m$ is
a sublattice of $B_n$, as noted above, and that $\phi^r = {\sigma^{(k-1)r}}_{|C^m}$ where $\sigma = (1 2 \cdots n) \in S_n$.  
As in the proof of Lemma \ref{L:Jordan}, the {\bf Claim} gives an SCD $\widehat{C}_j = \{ [A]  \ | \ A \in C_{i_j}' \}$, $j = 1, 2, \dots , m$,
of $B_n/H$ where $H = \langle \sigma^{(k-1)r} \rangle$.   Thus, the subfamily 
$$ \widehat{C}_j = \{ [A]  \ | \ A \in C_{i_j}' \cap C^m\} ,  \ j = 1, 2, \dots , m,$$
is an SCD for  $C^m/K$.

\end{document}